\title{Hypersurface representation and the image of\\the double $S^3$--transfer}
\author{Mitsunori Imaoka}
\address{Department of Mathematics Education\\
Graduate School of Education\\
Hiroshima University\\\newline
1--1--1 Kagamiyama\\
Higashi--Hiroshima 739-8524\\
Japan}
\email{imaoka@hiroshima-u.ac.jp}
\urladdr{}
\def\cnewtheorem#1[#2]#3{\newtheorem{#1}{#3}[section]
\expandafter\let\csname c@#1\endcsname\c@thm}
\let\xysavmatrix\xymatrix
\def\xymatrix{\disablesubscriptcorrection\xysavmatrix}
\theoremstyle{plain}
\newtheorem{thm}{Theorem}
\newtheorem*{thm*}{Theorem}
\theoremstyle{definition}
\theoremstyle{remark}
\renewcommand{\H}{\mathbb H}
\begin{document}

\begin{abstract}
We study the image of a transfer homomorphism in the stable homotopy
groups of spheres.  Actually, we show that an element of order 8 in the 18
dimensional stable stem is in the image of a double transfer homomorphism,
which reproves a result due to P\,J Eccles that the element is represented
by a framed hypersurface.  Also, we determine the image of the transfer
homomorphism in the 16 dimensional stable stem.
\end{abstract}

\maketitle

\section{Introduction and result}
\label{sec1}

Let $\nu ^*\in\pi ^s_{18}(S^0)$ be an element of order $8$ in the 
stable homotopy groups of spheres.
Throughout the paper, $\pi _i(X)$ (resp. $\pi ^s_i(X)$) denotes the 
homotopy group (resp. the stable homotopy group) of a space $X$, 
and we use the notations of Toda~\cite{To} for elements of $\pi ^s_*(S^0)$. 
Then, using a generators $\nu\in\pi ^s_3(S^0)\cong \Z /24$ and 
$\sigma\in\pi ^s_7(S^0)\cong\Z /240$, 
$\nu ^*$ is represented by the Toda bracket 
$\langle\sigma ,2\sigma ,\nu \rangle=-\langle\sigma ,\nu ,\sigma \rangle$ 
with no indeterminacy.

As is known, $\nu ^*$ is not in the image of the homomorphism 
$J_0\co \pi _{18}(SO)\to \pi ^s_{18}(S^0)$ induced by the stable 
$J$--map $J\co SO\to \Omega^{\infty}\Sigma ^{\infty}S^0$.
We shall show that $\nu ^*$ is in the image of the homomorphism 
$J_1\co\pi _{19}(\Sigma SO)\to \pi ^s_{18}(S^0)$ induced by the adjoint 
map $\Sigma SO\to \Omega ^{\infty}\Sigma ^{\infty}S^1$ to $J$.
Actually, we prove that $\nu ^*$ is in a double 
$S^3$--transfer image which is a subgroup of $\mbox{Im}(J_1)$. 

Eccles \cite{Ec} has made clear that $\mbox{Im}(J_1)$ consists of
elements represented by framed hypersurfaces. 
Such study has also done by Rees \cite{Re}. 
Our result gives the following.

\begin{thm*}[Eccles~{\cite[page~168]{Ec}}]
$\nu ^*$ is 
representable by a framed hypersurface of dimension $18$.
\end{thm*}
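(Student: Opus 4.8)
The plan is to deduce the hypersurface statement from the transfer computation that is the real content of the paper. By the cited identification of $\mbox{Im}(J_1)$ with the classes carried by framed hypersurfaces \cite{Ec}, it suffices to prove that $\nu ^*\in\mbox{Im}(J_1)$. Rather than aim at $\mbox{Im}(J_1)$ directly, I would prove the sharper assertion that $\nu ^*$ lies in the image of the double $S^3$--transfer, which is a subgroup of $\mbox{Im}(J_1)$; the theorem then follows at once by combining this membership with Eccles's geometric characterisation of $\mbox{Im}(J_1)$.

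Next I would fix the transfer. Writing $BS^3=\H P^\infty$, the single $S^3$--transfer is the Becker--Gottlieb stable map $\tau\co\Sigma ^\infty\H P^\infty_+\to S^0$ of the universal $Sp(1)$--bundle, and the double $S^3$--transfer is its two--factor analogue built on $\H P^\infty\times\H P^\infty$, using the smash summand $\H P^\infty\wedge\H P^\infty$ in positive degrees. Since a quaternionic bundle carries an underlying real bundle, $\tau$ is compatible with the forgetful map $Sp\to SO$, and the odd fibre dimension $3$ is what produces a single suspension; this is why the transfer image lands in $\mbox{Im}(J_1)$ and not merely in $\mbox{Im}(J_0)$. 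I would then record the one input needed about the cell structure: $\H P^\infty$ has cells only in dimensions $4k$, and the first attaching map $S^7\to S^4$ of $\H P^2=S^4\cup _\nu e^8$ is the Hopf class $\nu$.

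To locate $\nu ^*$ I would exploit its Toda--bracket description $\nu ^*=\langle\sigma ,2\sigma ,\nu \rangle=-\langle\sigma ,\nu ,\sigma \rangle$, which holds with no indeterminacy. The shape $\langle\sigma ,\nu ,\sigma \rangle$ is exactly what the double transfer should manufacture: the two copies of $\sigma\in\pi ^s_7$ correspond to the two $\H P^\infty$ factors, while the attaching map $\nu$ sits between them. Concretely, I would assemble a class in degree $18$ from $\sigma$ on each factor of $\H P^\infty\wedge\H P^\infty$ and use the compatibility of the Becker--Gottlieb transfer with composition and Toda brackets to identify its image with $\langle\sigma ,\nu ,\sigma \rangle=-\nu ^*$. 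Because the bracket has zero indeterminacy, any such identification pins the image down to $\nu ^*$ itself, an element of order $8$, rather than to a proper multiple, and the known fact $\nu ^*\notin\mbox{Im}(J_0)$ confirms that the suspension is genuinely needed.

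The main obstacle will be the explicit middle step: showing that the double $S^3$--transfer sends the constructed class to exactly $\nu ^*$. This demands careful $2$--local bookkeeping over the cells of $\H P^\infty\wedge\H P^\infty$, a precise transfer/Toda--bracket formula, and a verification that the resulting element really has order $8$ and not a smaller one. Proving that the double transfer (and not the single one) is what is required, and that its image in $\pi ^s_{18}(S^0)$ is precisely the predicted subgroup of $\mbox{Im}(J_1)$, is where the work concentrates; once it is done, the hypersurface representability of $\nu ^*$ is immediate from \cite{Ec}.
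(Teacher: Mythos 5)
Your outline coincides with the paper's strategy: reduce the theorem to showing that $\nu^*$ lies in the image of the double $S^3$--transfer, note that this image sits inside $\mbox{Im}(J_1)$, and quote Eccles's identification of $\mbox{Im}(J_1)$ with the classes represented by framed hypersurfaces. However, there are two genuine gaps. First, you take the transfer to be the Becker--Gottlieb stable transfer of the universal $Sp(1)$--bundle. The paper deliberately does \emph{not} do this: its map $t_H$ is defined as the composite $J\circ B\circ i$ through the Bott map $B\co\Sigma^3BSp\to SO$, and the paper explicitly records that it is not known whether this agrees with the Boardman (Becker--Gottlieb) transfer construction. The choice matters, because the containment $\mbox{Im}(t_H(2)_*)\subset\mbox{Im}(J_1)$ is deduced from the Eccles--Walker relation $J^*\circ H(m)\simeq\mu'\circ\Sigma(J\wedge J)$, which applies precisely because $t_H\wedge t_H=(J\wedge J)\circ(B\circ i\wedge B\circ i)$ factors through $J\wedge J$. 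For the Becker--Gottlieb transfer no such factorization is supplied, and your appeal to compatibility with the forgetful map $Sp\to SO$ and to the odd fibre dimension does not produce one; so with your definition the very step that feeds into Eccles's theorem is unjustified.

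Second, the step you defer as ``the main obstacle'' is the actual content of the paper's main theorem, and its resolution is more specific than the bookkeeping you anticipate. One needs a class $a\in\pi_{12}(\H P^\infty\wedge\H P^\infty)$ whose Hurewicz image is $\beta_1\otimes\beta_2-\beta_2\otimes\beta_1$; it exists because the $12$--skeleton of $\H P^\infty\wedge\H P^\infty$ is $S^8\cup_{\nu\vee\nu}(e^{12}\cup e^{12})$, both attaching maps being $\nu$, so the difference $\iota_1-\iota_2$ of the two top cells lifts. Then $(t_H^s\wedge 1)\circ a$ is a coextension of $\sigma$ (seen by collapsing to the top cell and using that the bottom-cell restriction of $t_H^s$ is $\sigma$), and $t_H^s$ itself is an extension of $\sigma$, so by the definition of the Toda bracket $(t_H^s\wedge t_H^s)_*(a)\in\langle\sigma,\nu,\sigma\rangle=\{-\nu^*\}$. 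In particular no separate verification of the order of the image and no $2$--local analysis is needed: the bracket has zero indeterminacy and equals $\{-\nu^*\}$ on the nose. Without the construction of $a$ and this extension/coextension argument, your proposal remains an outline of the theorem rather than a proof of it.
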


Here, a framed hypersurface of dimension $18$ is a closed 
manifold of dimension 18 embedded in $S^{19}$ and framed in $S^{N}$ 
for sufficiently large $N$, 
and the theorem means that $\nu ^*$ is the framed cobordism class of 
such a framed hypersurface when we regard $\pi ^s_{18}(S^0)$ as 
the framed cobordism group $\Omega ^{fr}_{18}$.

The Bott map $B\co\Sigma ^3BSp\to SO$ is the adjoint map to the homotopy 
equivalence $BSp\times \Z\to \Omega ^3SO$, 
where $SO=\bigcup _n SO(n)$ is the rotation group and $BSp$ is 
the classifying space of the symplectic group $Sp=\bigcup _nSp(n)$.
Let $\H P^{\infty}=\bigcup _n\H P^n$ be the infinite dimensional 
quaternionic projective space. 
Then, $\H P^{\infty}=BSp(1)$, and there is an inclusion map 
$i\co\H P^{\infty}\to BSp$. 
We define a map $t_H$ as the composition
$$
t_H\co\Sigma ^3\H P^{\infty}\stackrel{i}{\longrightarrow}
\Sigma ^3BSp\stackrel{B}{\longrightarrow}SO\stackrel{J}{\longrightarrow}
\Omega ^{\infty}\Sigma ^{\infty}S^0.
$$
We denote by $t_H^s\co\H P^{\infty}\to S^{-3}$ a stable map adjoint to 
$t_H$. 
It is not certain whether $t_H^s$ is stably homotopic to the stable map 
constructed by the Boardman's transfer construction \cite{Bo} on the 
principal $S^3$--bundle over $\H P^{\infty}$.
However, since $t_H$ has the following property in common with the 
Boardman's $S^3$--transfer map, 
we call $t_H$ and $t_H^s$ the $S^3$--transfer maps.

\begin{lem}\label{Lem1}
The restriction 
$t_H' \co S^7\to \Omega ^{\infty}\Sigma ^{\infty}S^0$ 
of $t_H$ to the bottom sphere represents 
$\sigma\in\pi ^s_7(S^0)$ up to sign.
\end{lem}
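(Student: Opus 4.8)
The plan is to reduce everything to the bottom cell and to Bott periodicity, and then to quote the classical determination of the image of the $J$--homomorphism in dimension $7$. Since $\H P^1=S^4$, the bottom sphere of $\Sigma^3\H P^\infty$ is $\Sigma^3\H P^1=\Sigma^3S^4=S^7$. Write $j\co S^4=\H P^1\hookrightarrow\H P^\infty=BSp(1)\stackrel{i}{\to}BSp$ for the bottom--cell inclusion followed by $i$. Then $t_H'$ is the composite
$$S^7=\Sigma^3S^4\stackrel{\Sigma^3 j}{\longrightarrow}\Sigma^3BSp\stackrel{B}{\longrightarrow}SO\stackrel{J}{\longrightarrow}\Omega^\infty\Sigma^\infty S^0.$$
Setting $c=B\circ\Sigma^3 j\co S^7\to SO$, the map $t_H'$ represents $J_*[c]\in\pi^s_7(S^0)$, where $[c]\in\pi_7(SO)\cong\Z$ by Bott periodicity. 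The lemma thus splits into two claims: that $[c]$ generates $\pi_7(SO)$, and that $J$ carries this generator to $\pm\sigma$.

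First I would check that $[c]$ generates $\pi_7(SO)\cong\Z$. By construction $B$ is adjoint to the Bott equivalence, so the corresponding map $\beta\co BSp\to\Omega^3SO$ induces an isomorphism $\pi_4(BSp)\stackrel{\cong}{\to}\pi_4(\Omega^3SO)=\pi_7(SO)$ (the $\Z$--factor in $\Omega^3SO\simeq BSp\times\Z$ contributes nothing in degree $4$). Naturality of the loop--suspension adjunction identifies $[c]\in\pi_7(SO)=[S^7,SO]\cong[S^4,\Omega^3SO]$ with $\beta_*[j]$. Since $[j]$ generates $\pi_4(BSp)\cong\pi_3(Sp)\cong\Z$ — the stabilization $\pi_3(Sp(1))\to\pi_3(Sp)$ being an isomorphism and $\H P^1\hookrightarrow\H P^\infty$ the bottom cell — and $\beta_*$ is an isomorphism in degree $4$, the class $[c]$ is a generator of $\pi_7(SO)$.

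It then remains to evaluate $J_*[c]$. By the classical computation of the image of $J$, the homomorphism $J\co\pi_7(SO)\to\pi^s_7(S^0)\cong\Z/240$ is onto and sends a generator of $\pi_7(SO)$ to $\pm\sigma$. To see that one lands on the specific generator $\pm\sigma$, rather than on an arbitrary generator of $\Z/240$, I would invoke the complex $e$--invariant, which is injective on $\mbox{Im}(J)=\pi^s_7(S^0)$ and takes the value $\pm 1/240$ both on $\pm\sigma$ and on $J_*$ of the Bott generator. Together with the previous paragraph, this shows that $t_H'$ represents $\pm\sigma$.

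The Bott--periodicity bookkeeping in the second paragraph — tracking the bottom cell across the adjoint of $\beta$ — is routine. The step that genuinely needs care, and where I expect the main obstacle, is the last one: fixing the $J$--image as \emph{exactly} $\pm\sigma$ and not one of the other $\varphi(240)$ generators of $\Z/240$. This hinges on the normalization of Toda's $\sigma$ together with the precise $e$--invariant of the Bott generator, so I would make sure that computation (or an explicit appeal to the image--of--$J$ literature) is stated sharply enough to remove the ambiguity.
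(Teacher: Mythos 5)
Your proposal is correct, and its skeleton matches the paper's: both arguments reduce the lemma to showing (a) that the restriction of $B\circ i$ to the bottom sphere gives a generator of $\pi _7(SO)\cong\Z$, and (b) that $J_0$ carries a generator $\iota _7$ to $\pm\sigma$. The implementations differ, though. For (a), the paper passes to the adjoint $\Sigma ^4\H P^{\infty}\to BSO$ of $B\circ i$, identifies it (citing this as known) with the classifying map $\varphi$ of the bundle $\tilde{\xi}_1^*\otimes _{\H}\tilde{\xi}$, and observes that the restriction of $\varphi$ to $S^8=S^4\wedge S^4$ classifies a generator of $\pi _8(BSO)$ --- Bott periodicity in its bundle-level, external-product form. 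You instead run the loop--suspension adjunction directly: $[c]$ corresponds to $\beta _*[j]\in\pi _4(\Omega ^3SO)$, the class $[j]$ generates $\pi _4(BSp)$, and $\beta _*$ is an isomorphism because $B$ is by definition adjoint to the Bott equivalence. This is the same periodicity fact, but your version is more self-contained, since it uses only the paper's definition of $B$ and no bundle-level identification. For (b), the paper simply cites the classical normalization $\sigma =J_0(\iota _7)$ up to sign, whereas you invoke surjectivity of $J$ in dimension $7$ and then the complex $e$--invariant to pin down the unit in $\Z /240$. Your worry about which generator of $\Z /240$ one lands on is legitimate, but note a potential circularity in your resolution: the value $e_{\C}(\sigma )=\pm 1/240$ is itself most commonly obtained \emph{from} the identity $\sigma =\pm J_0(\iota _7)$ together with Adams' Bernoulli-number formula for $e_{\C}(J_0(\iota _7))$. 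Unless you cite an independent computation of $e_{\C}(\sigma )$ (for example, a Chern character computation on the mapping cone of the octonionic Hopf map), the cleaner move is the paper's: quote directly the classical fact that the octonionic Hopf map $\sigma$ is, up to sign, the $J$--image of the generator of $\pi _7(SO)$.
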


Let $\mu \co \Omega ^{\infty}\Sigma ^{\infty}S^0\wedge\Omega ^{\infty}
\Sigma ^{\infty}S^0 \to \Omega ^{\infty}\Sigma ^{\infty}S^0$ 
be the multiplication on the infinite loop space 
$\Omega ^{\infty}\Sigma ^{\infty}S^0$ given by composition, and 
$\mu '\co \Sigma\Omega ^{\infty}\Sigma ^{\infty}S^0\wedge\Omega ^{\infty}
\Sigma ^{\infty}S^0 \to \Omega ^{\infty}\Sigma ^{\infty}S^1$ 
the adjoint map to $\mu$. 
Then, we set 
$$
t_H(2)= \mu '\circ \Sigma (t_H\wedge t_H)\co\Sigma ^7\H P^{\infty}
\wedge\H P^{\infty}\to\Omega ^{\infty}\Sigma ^{\infty}S^1, 
$$
and call $t_H(2)$ a double $S^3$--transfer map. 
Then, our main result may be stated as follows:

\begin{thm}\label{ThA}
There exists an element $a\in\pi _{12}(\H P^{\infty}\wedge\H P^{\infty})$ 
satisfying 
$$
t_H(2)_*(\Sigma ^7a)=\nu ^* \ \ \ in \ \ \pi _{18}^s(S^0). 
$$ 
\end{thm}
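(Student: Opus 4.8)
The plan is to realize $\nu^*$ as the value of $t_H(2)$ on a coextension assembled from the two $12$--cells of $\H P^\infty\wedge\H P^\infty$, and then to identify that value with the Toda bracket $\langle\sigma,\nu,\sigma\rangle=-\nu^*$.

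First I would pin down the relevant cell structure. Since $\H P^\infty$ has one cell in each dimension $4n$, with the $8$--cell attached to $\H P^1=S^4$ by the Hopf map $\nu$, the smash $X=\H P^\infty\wedge\H P^\infty$ has bottom cell $S^8=\H P^1\wedge\H P^1$ and exactly two $12$--cells, coming from $\H P^2\wedge\H P^1$ and $\H P^1\wedge\H P^2$, each attached to $S^8$ by $\nu$. Thus the $12$--skeleton is $C=S^8\cup_\nu e^{12}\cup_\nu e^{12}$, and because the next cells of $X$ lie in dimension $16$ we have $\pi_{12}(X)=\pi_{12}(C)$. From the cofibration $S^8\to C\to S^{12}\vee S^{12}$ and $\pi_{12}(S^8)\cong\pi^s_4(S^0)=0$, the boundary $\pi_{12}(S^{12}\vee S^{12})=\Z\oplus\Z\to\pi_{11}(S^8)=\Z/24$ is $(x,y)\mapsto(x+y)\nu$, so $\pi_{12}(C)\cong\{(x,y):x+y\equiv 0\ (24)\}$. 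In particular there is a well defined element $a\in\pi_{12}(X)$ whose projection to the two top cells is $(\iota_{12},-\iota_{12})$: it exists because $\nu-\nu=0$, and it is unique because the indeterminacy $\pi_{12}(S^8)$ vanishes. This $a$ is my candidate.

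Next I would analyze $t_H(2)$ on $\Sigma^7C=S^{15}\cup_\nu e^{19}\cup_\nu e^{19}$. On the bottom sphere it restricts to $\mu'\circ\Sigma(t_H'\wedge t_H')$, which by Lemma~\ref{Lem1} and the definition of $\mu'$ represents the product $\sigma\cdot\sigma=\sigma^2$ (an element of $\pi^s_{14}(S^0)\cong\pi^s_{15}(S^1)$). The two $19$--cells carry two extensions of this map. Writing $\phi\co\Sigma^3\H P^2\to\Omega^\infty\Sigma^\infty S^0$ for the restriction of $t_H$, which extends $\sigma$ over the cone on $\nu$ and so records a null homotopy of $\sigma\nu$, the first $19$--cell carries $E_1=\mu'\circ\Sigma(\phi\wedge t_H')$ and the second carries $E_2=\mu'\circ\Sigma(t_H'\wedge\phi)$; both restrict to $\sigma^2$ on $S^{15}$ (up to the swap of smash factors). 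Thus $E_1$ extends $\sigma^2$ across $\nu$ using the null homotopy of $\sigma\nu$ supplied by the first factor, while $E_2$ uses the null homotopy of $\nu\sigma$ supplied by the second factor.

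Finally, since $a$ projects to $(\iota_{12},-\iota_{12})$, the value $t_H(2)_*(\Sigma^7a)$ is exactly the difference $E_1-E_2$ read off on the common top cell. By the standard description of the triple product as the difference of the two extensions of $\sigma^2$ obtained from a null homotopy of $\sigma\nu$ on one side and of $\nu\sigma$ on the other, this difference lies in $\langle\sigma,\nu,\sigma\rangle$, the extra suspension in $\mu'$ accounting for the $+1$ in the degree. As the excerpt records that $\langle\sigma,\nu,\sigma\rangle=-\nu^*$ has no indeterminacy, we obtain $t_H(2)_*(\Sigma^7a)=-\nu^*$, and replacing $a$ by $-a$ gives $\nu^*$ on the nose. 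I expect the main obstacle to be this last step: converting the geometric difference of the two product extensions into the algebraic bracket $\langle\sigma,\nu,\sigma\rangle$, keeping track of the adjoint $\mu'$ and of the several sign conventions, and checking that $\phi$ furnishes precisely the null homotopies entering the bracket. The vanishing of the indeterminacy makes the final identification robust, but establishing that the transfer extensions realize exactly this bracket is the crux of the argument.
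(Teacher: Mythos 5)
Your construction of the element $a$ coincides exactly with the paper's Lemma \ref{Lem4}: the same cell structure $S^8\cup_{\nu\vee\nu}(e^{12}\cup e^{12})$ for the $12$--skeleton, the same exact sequence with $\pi^s_{12}(S^8)=0$, and the same characterization of $a$ as the class projecting to $\iota_1-\iota_2$ on the two top cells (the paper phrases this through the Hurewicz image $\beta_1\otimes\beta_2-\beta_2\otimes\beta_1$, which is equivalent). Your reading of $t_H(2)_*(\Sigma^7a)$ as the difference of the two restrictions $E_1$, $E_2$ of the double transfer to the two $12$--cells is also correct in principle, and your worry about the adjoint $\mu'$ is dealt with in the paper by the observation $t_H(2)_*=(t_H^s\wedge t_H^s)_*\Sigma^{\infty}$, so that the whole computation can be carried out stably with the smash $t_H^s\wedge t_H^s$.

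The genuine gap is the step you yourself flag as the crux, and it cannot be discharged by citing a ``standard description.'' The two null homotopies you glue are both null homotopies of $\sigma\nu$, one sitting in the first smash factor of $(\sigma\wedge\sigma)\circ\nu$ and one in the second, whereas the Toda bracket $\langle\sigma,\nu,\sigma\rangle$ is defined by gluing the two null homotopies $A\circ\gamma$ and $\alpha\circ B$ of a \emph{linear} composite $\alpha\circ\beta\circ\gamma$ coming from $A\co\alpha\beta\simeq 0$ and $B\co\beta\gamma\simeq 0$. Rearranging the smash--factor picture into that composition picture (with all the attendant suspensions and signs) is precisely the content that remains to be proved, and your proposal stops exactly there. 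The paper closes this hole with a short factorization argument instead: it writes $(t_H^s\wedge t_H^s)\circ a=(1\wedge t_H^s)\circ\bigl((t_H^s\wedge 1)\circ a\bigr)$, uses the Hurewicz data of $a$ together with $\pi_4^s(\H P^2)\cong\Z$ to show that $(t_H^s\wedge 1)\circ a$ is a coextension of $\sigma$ in $S^{-3}\wedge\H P^2$, which is stably the cofiber $S^1\cup_{\nu}e^5$ of $\nu$, and notes that $1\wedge t_H^s$ is an extension of $\sigma$ over that same cofiber by Lemma \ref{Lem1}; the composite of an extension of $\sigma$ with a coextension of $\sigma$ lies in $\langle\sigma,\nu,\sigma\rangle=\{-\nu^*\}$ by the very definition of the Toda bracket, with no smash--factor gymnastics required. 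If you want to retain your difference--of--extensions formulation, you should prove it by exactly such a factorization; as written, the decisive identification is asserted rather than established.
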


Applying a result due to Eccles and Walker~\cite{EW}, 
we have $\text{\rm Im}(t_H(2)_*) \subset\text{\rm Im}(J_1)$ 
(see \fullref{Prop2} and Corollary\ref{Cor3}).
Hence, it turns out that $\nu ^*$ is represented by a framed hypersurface 
of dimension $18$ as is stated in the first theorem.

In Carlisle et al~\cite{CE}, effective use of the $S^1$--transfer homomorphism 
$\tau \co\pi ^s_{i-1}(\C P^{\infty})\to\pi ^s_i(S^0)$ shows that 
certain elements are represented by framed hypersurfaces. 
In this respect, $\nu ^*$ is not in the image of the double 
$S^1$--transfer homomorphism 
$\tau _2\co \pi ^s_{16}(\C P^{\infty}\wedge
\C P^{\infty})\to\pi ^s_{18}(S^0)$. 
In fact, the image of $\tau _2$ in $\pi ^s_{18}(S^0)$ 
is equal to $\Z /4\{2\nu ^*\}$ by Imaoka~\cite[Theorem~10]{Im}.

By Toda~\cite{To}, 
$\pi _{16}^s(S^0)=\Z /2\{ \eta ^*\}\oplus \Z /2\{ \eta\circ\rho\}$. 
Here, $\eta\in\pi ^s_1(S^0)\cong\Z /2$ is a generator, 
$\eta ^*$ is an element in the Toda bracket 
$\langle\sigma ,2\sigma ,\eta \rangle$ 
and $\rho$ is a generator of the image of 
$J_0\co\pi _{15}(SO)\to \pi _{15}^s(S^0)$. 
Mahowald~\cite{Ma} has constructed an important family 
$\eta _i\in\pi _{2^i}^s(S^0)$ for $i\ge 3$, 
and $\eta ^*\equiv\eta _4 \pmod{\eta\circ\rho}$. 

By \fullref{Lem1}, we see the following:

\begin{lem}\label{tH20}
${\rm Im}(t_H(2)_*)=0$ in $\pi _{16}^s(S^0)$. 
\end{lem}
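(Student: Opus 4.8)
The plan is to compute $t_H(2)_*$ in the single degree that can land in $\pi^s_{16}(S^0)$. Since $t_H(2)$ takes values in $\Omega^\infty\Sigma^\infty S^1$, its image in $\pi^s_{16}(S^0)\cong\pi_{17}(\Omega^\infty\Sigma^\infty S^1)$ is exactly the image of $t_H(2)_*$ on $\pi_{17}(\Sigma^7\H P^\infty\wedge\H P^\infty)$. First I would read off the cell structure: the cells of $\H P^\infty\wedge\H P^\infty$ occur in dimensions $4i+4j$ with $i,j\ge 1$, so the bottom cell is $S^8$ and the next cell is in dimension $12$. After the shift $\Sigma^7$ the bottom cell is $S^{15}$ and the next cell is $S^{19}$, so the pair $(\Sigma^7\H P^\infty\wedge\H P^\infty,\,S^{15})$ is $18$--connected and the bottom--cell inclusion induces an isomorphism $\pi_{17}(S^{15})\cong\pi_{17}(\Sigma^7\H P^\infty\wedge\H P^\infty)$. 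In the stable range $\pi_{17}(S^{15})=\Z/2\{\eta^2\}$, so the image of $t_H(2)_*$ in $\pi^s_{16}(S^0)$ is generated by the image of the single class factoring as $\eta^2$ through the bottom sphere $S^{15}$.

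Next I would identify the restriction of $t_H(2)$ to this bottom sphere. Write $S^{15}=\Sigma(S^7\wedge S^7)$, where each $S^7$ is the bottom cell of $\Sigma^3\H P^\infty$. The restriction of $t_H\wedge t_H$ to $S^7\wedge S^7$ is $t_H'\wedge t_H'$, and by \fullref{Lem1} each factor represents $\sigma$; hence $\mu\circ(t_H'\wedge t_H')$ represents the product $\sigma\cdot\sigma=\sigma^2$ in $\pi^s_{14}(S^0)$. Passing to the adjoint $\mu'$ and accounting for the extra suspension in $t_H(2)=\mu'\circ\Sigma(t_H\wedge t_H)$, the restriction of $t_H(2)$ to $S^{15}$ represents $\sigma^2\in\pi^s_{14}(S^0)\cong\pi_{15}(\Omega^\infty\Sigma^\infty S^1)$ up to sign. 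Composing with $\eta^2\co S^{17}\to S^{15}$ then shows that the generator of $\pi_{17}(\Sigma^7\H P^\infty\wedge\H P^\infty)$ is carried to $\pm\,\sigma^2\circ\eta^2=\pm\,\sigma^2\eta^2\in\pi^s_{16}(S^0)$. Thus the lemma reduces to the single relation $\sigma^2\eta^2=0$.

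Finally I would verify $\sigma^2\eta^2=0$ from the known ring structure of $\pi^s_*(S^0)$. Using $\eta\sigma=\bar\nu+\epsilon$ and $\eta\bar\nu=\nu^3$, the product $\eta^2\sigma$ equals $\nu^3$ modulo terms of higher filtration, and $\sigma\cdot\nu^3=\nu^2(\nu\sigma)=0$ because $\eta\nu=0$ annihilates the $2$--primary part of $\nu\sigma$; the remaining low--dimensional products such as $\sigma\eta\epsilon$ vanish by the same relations of Toda~\cite{To}. As a consistency check, in the Adams spectral sequence $\sigma^2\eta^2$ would be detected by $h_1^2h_3^2$, and $h_1^2h_3^2=h_2^3h_3=h_2^2(h_2h_3)=0$ since $h_1^2h_3=h_2^3$ and $h_2h_3=0$, so $\sigma^2\eta^2$ must have high filtration. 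The main obstacle is the bottom--cell identification of the second paragraph: one must check that the adjoint multiplication $\mu'$ together with the single suspension genuinely turns the external smash $t_H'\wedge t_H'$ into the composition (ring) product $\sigma^2$, with no stray correction term, since the whole conclusion depends on this restriction being exactly $\sigma^2$.
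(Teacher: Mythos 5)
Your reduction is exactly the paper's argument: the paper identifies
$\pi_{17}(\Sigma^7\H P^{\infty}\wedge\H P^{\infty})\cong\pi^s_{10}(\H P^{\infty}\wedge\H P^{\infty})\cong\pi^s_{10}(S^8)=\Z/2\{\eta^2\}$
and then computes $t_H(2)_*(\eta^2)=((t_H^s)_*(\eta))^2=(\sigma\circ\eta)^2$, which in the elementary abelian $2$--group $\pi^s_{16}(S^0)$ is the same element as your $\sigma^2\circ\eta^2$. So the structural part of your proof (connectivity of the pair, identification of the generator with $\eta^2$ on the bottom cell, and the fact that $\mu'$ converts $t_H'\wedge t_H'$ into the composition product) is correct and coincides with the paper; the worry you raise at the end about a possible correction term is unfounded, since the multiplication $\mu$ is composition and therefore induces exactly the product of $\pi^s_*(S^0)$ on homotopy, which is how the paper uses it via Lemma~\ref{Lem1}.

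The genuine gap is in your verification of $\sigma^2\eta^2=0$, which the paper simply asserts as a known relation from Toda's tables. Your bookkeeping reduces it to $\sigma^2\eta^2=\sigma\nu^3+\sigma\eta\epsilon$. The first term vanishes because $\nu\sigma=0$, a standard Toda relation (it is needed for the bracket $\langle\sigma,\nu,\sigma\rangle$ defining $\nu^*$ in this very paper); your stated reason, that ``$\eta\nu=0$ annihilates the $2$--primary part of $\nu\sigma$'', is not a proof of it. More seriously, the claim that the remaining term $\sigma\eta\epsilon$ ``vanishes by the same relations'' is unjustified: from $\eta\sigma=\overline{\nu}+\epsilon$ and $\eta\overline{\nu}=\nu^3$ alone one cannot conclude $\sigma\eta\epsilon=0$; that statement is equivalent to $\overline{\nu}\epsilon+\epsilon^2=0$, a genuine relation in $\pi^s_{16}(S^0)$ which is precisely the nontrivial content of the lemma's arithmetic. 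One way to close the gap: write $\sigma^2\eta^2=\eta\cdot(\eta\sigma^2)$; since $2\eta=0$, the element $\eta\sigma^2$ lies in the $2$--torsion subgroup $\{0,16\rho,\eta\kappa,16\rho+\eta\kappa\}$ of $\pi^s_{15}(S^0)=\Z/32\{\rho\}\oplus\Z/2\{\eta\kappa\}$, and $\eta$ annihilates all of it, because $16\eta\rho=0$ and $\eta^2\kappa=0$ (the latter follows from $\eta^3\kappa=12\nu\kappa=0$ together with the injectivity of $\eta\cdot\colon\pi^s_{16}(S^0)\to\pi^s_{17}(S^0)$, visible from Toda's description of $\pi^s_{17}(S^0)$, or can be quoted from Toda directly). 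Note also that your Adams spectral sequence check, while a correct Ext computation ($h_1^2h_3^2=h_2^3h_3=0$), only shows $\sigma^2\eta^2$ has filtration at least $5$ and so does not exclude the value $\eta\rho$; it cannot substitute for the missing step.
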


Mukai~\cite[Theorem~2]{Mu} has shown that both $\nu ^*$  and 
$\eta ^*$ are in the image of the $S^1$--transfer homomorphism 
$\tau \co\pi _{i-1}^s(\C P^{\infty})\to\pi _i^s(S^0)$.  
Morisugi~\cite[Corollary~E]{Mo} has shown that all Mahowald's 
elements are in the image of the $S^3$--transfer homomorphism 
$\pi _*^s(Q_{\infty})\to\pi _*^s(S^0)$ given for the 
quaternionic quasi-projective space $Q_{\infty}=\bigcup _nQ_n$. 

In contrast with these, we have the following:

\begin{thm}\label{PropB}
${\rm Im}((t_H^s)_*\co\pi _{13}^s(\H P^{\infty})
\to \pi _{16}^s(S^0))=\Z /2\{\eta\circ\rho\}$, 
and thus $\eta^*\not\in \text{\rm Im}(t_H^s)_*$.
\end{thm}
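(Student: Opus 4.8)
The plan is to bound the image from above by a structural argument and then to realise $\eta\circ\rho$ by an explicit class, so that the two inclusions meet. Since the cells of $\H P^\infty$ sit in dimensions $4,8,12,\dots$ and those above dimension $13$ do not affect $\pi_{13}^s$, I first record $\pi_{13}^s(\H P^\infty)=\pi_{13}^s(\H P^3)$; let $\iota_4\co S^4\to\H P^\infty$ denote the inclusion of the bottom cell.

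For the upper bound I would use the factorisation $t_H=J\circ B\circ i$. Given a stable map $f\co S^{13}\to\H P^\infty$, the class $(t_H^s)_*(f)\in\pi_{16}^s(S^0)$ is, after passing to the adjoint, equal to $J_0$ applied to $(B\circ i\circ\Sigma^3 f)\in\pi_{16}(SO)$. Hence $\mathrm{Im}(t_H^s)_*\subseteq\mathrm{Im}(J_0\co\pi_{16}(SO)\to\pi_{16}^s(S^0))$. As $\pi_{16}(SO)=\Z/2$ and $J_0$ is injective here, this image is a single $\Z/2$; the linkage of the image of $J$ in adjacent dimensions $8k-1,8k,8k+1$ by multiplication by $\eta$ identifies its generator in dimension $16$ with $\eta\circ\rho$, where $\rho$ generates $\mathrm{Im}(J_0)$ in dimension $15$. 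Since $\eta^*\equiv\eta_4$ lies in the cokernel of $J$ (being detected by $h_1h_4$), this already yields $\mathrm{Im}(t_H^s)_*\subseteq\Z/2\{\eta\circ\rho\}$ and in particular $\eta^*\notin\mathrm{Im}(t_H^s)_*$.

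For the reverse inclusion I would invoke \fullref{Lem1}: the restriction of $t_H^s$ to the bottom cell is $\pm\sigma\in\pi_7^s(S^0)$, so for $\beta\in\pi_9^s(S^0)\cong\pi_{13}^s(S^4)$ one has $(t_H^s)_*(\iota_4\circ\beta)=\pm\sigma\circ\beta$. The quaternionic attaching map gives $\iota_4\circ\nu=0$ in $\pi_7^s(\H P^\infty)$, hence $\iota_4\circ\nu^3=0$ and the $\nu^3$--summand of $\pi_9^s(S^0)$ contributes nothing; on the surviving generators I would compute the composition product, identifying $\sigma\circ\mu_9$ (or $\sigma\circ\eta^2\sigma$) with $\eta\circ\rho$ by a Toda bracket shuffle. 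This realises $\eta\circ\rho$ and forces the equality $\mathrm{Im}(t_H^s)_*=\Z/2\{\eta\circ\rho\}$.

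The main obstacle is this last step: one must both check that the relevant class survives in $\pi_{13}^s(\H P^\infty)$ after the $8$-- and $12$--cells are attached, and pin down the product in $\pi_{16}^s(S^0)$ to be the nonzero class $\eta\circ\rho$ rather than $0$. Equivalently, one can read $\eta\circ\rho$ off the top cell: $t_H$ sends $\Sigma^3 S^{12}=S^{15}$ to an odd multiple of a generator of $\pi_{15}(SO)=\Z$, so $J_0$ of it is an odd multiple of $\rho$, and multiplying by $\eta$ produces $\eta\circ\rho$; either route reduces to establishing this single nonzero composition.
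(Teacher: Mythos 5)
Your lower-bound step is sound and coincides with the paper's concluding step: by \fullref{Lem1}, $(t_H^s)_*((i_0)_*\mu)=\pm\,\sigma\circ\mu=\pm\,\eta\circ\rho$, and no separate ``survival'' check is needed, since nonvanishing of the composite already forces $(i_0)_*\mu\neq 0$ in $\pi_{13}^s(\H P^\infty)$. The genuine gap is in your upper bound. The assertion that $(t_H^s)_*(f)$ equals $J_0\bigl(B\circ i\circ\Sigma^3 f\bigr)$ with $B\circ i\circ\Sigma^3 f\in\pi_{16}(SO)$ silently treats the \emph{stable} class $f\in\pi_{13}^s(\H P^\infty)$ as an honest map $S^{13}\to\H P^\infty$. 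A stable class is only guaranteed to be represented by some map $f_k\co S^{13+k}\to\Sigma^k\H P^3$, and then the factorization $t_H=J\circ B\circ i$ only gives $(t_H^s)_*(f)=J_{k-3}\bigl(\Sigma^{k-3}(B\circ i)\circ f_k\bigr)$, where $J_{k-3}\co\pi_{13+k}(\Sigma^{k-3}SO)\to\pi_{16}^s(S^0)$ is induced by the adjoint $\Sigma^{k-3}SO\to\Omega^\infty\Sigma^\infty S^{k-3}$. Freudenthal guarantees representability only at $k=6$ (the suspension $\pi_{13+k}(\Sigma^k\H P^3)\to\pi_{13}^s(\H P^3)$ is surjective for $k\ge 6$), so without further input your argument yields only $\mathrm{Im}(t_H^s)_*\subseteq\mathrm{Im}(J_3)$. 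That containment is useless for your purpose: the images $\mathrm{Im}(J_0)\subseteq\mathrm{Im}(J_1)\subseteq\mathrm{Im}(J_2)\subseteq\cdots$ grow, and by \fullref{eta*2} one has $\eta^*\in\mathrm{Im}(J_2)\subseteq\mathrm{Im}(J_3)$, so the exclusion of $\eta^*$ evaporates. (Indeed, the existence of interesting elements in the higher $J_k$--images but not in $\mathrm{Im}(J_0)$ is the whole point of the paper.)

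To repair the upper bound you would have to show that every class of $\pi_{13}^s(\H P^\infty)$ desuspends to $\pi_{16}(\Sigma^3\H P^\infty)$, which in practice means identifying the group and its generators --- exactly the computation your proposal tries to avoid, and exactly what the paper does. The paper works with the cell structure $\H P^3=(S^4\cup_\nu e^8)\cup_\varphi e^{12}$, where $\varphi$ is stably a coextension of $2\nu$, and uses the Toda brackets $\langle\nu,2\nu,\eta\rangle\ni\epsilon$ and $\langle\nu,2\nu,\eta^2\rangle\ni\eta\circ\epsilon$ to show that $\varphi_*$ is injective on $\pi_{13}^s(S^{12})$ and kills the $(i_0)_*(\eta\circ\epsilon)$--summand of $\pi_{13}^s(\H P^2)$; hence $\pi_{13}^s(\H P^\infty)=\Z/2\{(i_0)_*(\mu)\}$, and the theorem follows from \fullref{Lem1} together with $\sigma\circ\mu=\eta\circ\rho$. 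Note finally that your fallback route of ``reading $\eta\circ\rho$ off the top cell'' cannot work even in principle: injectivity of $\varphi_*$ on $\pi_{13}^s(S^{12})=\Z/2\{\eta\}$ means $q_*\co\pi_{13}^s(\H P^3)\to\pi_{13}^s(S^{12})$ is zero, so no class of $\pi_{13}^s(\H P^\infty)$ lies over the top cell at all.
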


We also remark the following noted by Eccles~\cite{Ec}, where 
$J_2\co\pi _{18}(\Sigma ^2SO)\to \pi _{16}^s(S^0)$ is the 
homomorphism induced by the map 
$\Sigma ^2SO\to \Omega ^{\infty}\Sigma ^{\infty}S^2$ 
adjoint to the stable $J$--map. 

\begin{prop}\label{eta*2}
$\eta ^*\in{\rm Im}(J_2)$. 
\end{prop}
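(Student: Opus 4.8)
The plan is to lift the Toda bracket defining $\eta^*$ into the space $\Sigma^2 SO$, exploiting the fact that $\sigma$ lies in the image of $J_0$. Write $\sigma=J_0(g)$ for a generator $g\in\pi_7(SO)\cong\Z$, and recall $\eta^*\in\langle\sigma,2\sigma,\eta\rangle$. Since the map $\Sigma^2 SO\to\Omega^{\infty}\Sigma^{\infty}S^2$ inducing $J_2$ is the double suspension of $J$ followed by the canonical map $\Sigma^2\Omega^{\infty}\Sigma^{\infty}S^0\to\Omega^{\infty}\Sigma^{\infty}S^2$, one has $J_0=J_2\circ E^2$ under the identification $\pi^s_{16}(S^0)\cong\pi^s_{18}(S^2)$, where $E^2\co\pi_{16}(SO)\to\pi_{18}(\Sigma^2 SO)$ is the double suspension; in particular ${\rm Im}(J_0)\subseteq{\rm Im}(J_2)$.

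First I would construct a lift of the bracket. Represent $2\sigma$ by a map $E^2w\co S^{16}\to S^9$ with $w\in\pi_{14}(S^7)$ a desuspension of $2\sigma$ (using Toda's element $\sigma'$), and $\eta$ by $E^2\eta\co S^{17}\to S^{16}$. Then, by functoriality of suspension, the composite $(\Sigma^2 g)\circ E^2w=E^2(g\circ w)$ is null, because $g\circ w\in\pi_{14}(SO)=\pi_6(SO)=0$ by Bott periodicity; this is the $SO$--level incarnation of the (stably valid) relation $\sigma\cdot 2\sigma=0$, now holding on the nose. Granting also the second relation $2\sigma\circ\eta\simeq *$, one obtains a well-defined class
$$
\theta=\langle\Sigma^2 g,\,2\sigma,\,\eta\rangle\in\pi_{18}(\Sigma^2 SO).
$$
By naturality of Toda brackets under post-composition with the map $\Sigma^2 SO\to\Omega^{\infty}\Sigma^{\infty}S^2$, which carries $\Sigma^2 g$ to a representative of $\sigma$, we get $J_2(\theta)\in\langle\sigma,2\sigma,\eta\rangle$, so that ${\rm Im}(J_2)$ meets this bracket.

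To conclude $\eta^*\in{\rm Im}(J_2)$, rather than merely some element of the coset, I would verify that the whole indeterminacy of $\langle\sigma,2\sigma,\eta\rangle$, namely $\sigma\circ\pi^s_9(S^0)+\eta\circ\pi^s_{15}(S^0)$, is contained in ${\rm Im}(J_2)$. Both $\sigma$ and $\eta$ lie in ${\rm Im}(J_0)$ (for $\eta$, the map $J_0\co\pi_1(SO)=\Z/2\to\pi^s_1(S^0)$ is onto), and ${\rm Im}(J_0)$ is a $\pi^s_*(S^0)$--submodule since $SO$ is an infinite loop space and $J_0$ is $\pi^s_*$--linear; hence $\sigma\circ\pi^s_9(S^0)$ and $\eta\circ\pi^s_{15}(S^0)$ both lie in ${\rm Im}(J_0)\subseteq{\rm Im}(J_2)$. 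Therefore $\langle\sigma,2\sigma,\eta\rangle\subseteq J_2(\theta)+{\rm Im}(J_2)={\rm Im}(J_2)$, whence $\eta^*\in{\rm Im}(J_2)$. (Incidentally this also places $\eta\circ\rho$ in ${\rm Im}(J_2)$, via $\rho=J_0(r)$ and $J_0(r\circ\eta)=\eta\circ\rho$.)

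The main obstacle is the second null--homotopy, i.e. checking that $\langle\Sigma^2 g,2\sigma,\eta\rangle$ is genuinely defined as an \emph{unstable} class in $\pi_{18}(\Sigma^2 SO)$: although $2\sigma\cdot\eta=0$ stably, the composite $E^2(w\circ\eta)=E^2(2\sigma'\circ\eta)$ need only vanish after sufficiently many suspensions, so one must show it can be made to vanish after exactly two, or adjust the representatives of $2\sigma$ and $\eta$ within their suspension classes so as to kill it. I expect this to follow from Toda's metastable computations of $\pi_{15}(S^7)$ and $\pi_{17}(S^9)$ through the EHP sequence; this is the only delicate point, the remainder being formal naturality together with the Bott--periodicity vanishing $\pi_{14}(SO)=0$.
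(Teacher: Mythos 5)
Your core construction coincides with the paper's: you desuspend $2\sigma$ to Toda's $\sigma'\in\pi_{14}(S^7)$ and $\eta$ to $\eta_{16}\in\pi_{17}(S^{16})$, kill the first composite using $\pi_{14}(SO)=0$, form the unstable bracket $\{\Sigma^2\iota_7,\Sigma^2\sigma',\eta_{16}\}\subset\pi_{18}(\Sigma^2SO)$, and push it into $\langle\sigma,2\sigma,\eta\rangle$ by naturality of $J_2$. The ``delicate point'' you flag --- that $(\Sigma^2\sigma')\circ\eta_{16}$ vanishes after exactly two suspensions --- is resolved in the paper by citing Toda's Theorem~7.1, which asserts precisely this; so your deferral to Toda's computations is legitimate and is exactly what the paper does.

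The genuine gap is in your handling of the indeterminacy. The claim that ${\rm Im}(J_0)$ is a $\pi^s_*(S^0)$--submodule is false: $\sigma=J_0(\iota_7)\in{\rm Im}(J_0)$, yet $\sigma\circ\sigma=\sigma^2\neq 0$ in $\pi^s_{14}(S^0)$, while ${\rm Im}(J_0)=0$ there because $\pi_{14}(SO)=0$. The composition formula $J_0(g)\circ\beta=J_0(g\circ b)$ comes from the unstable identity $J(g\circ b)=J(g)\circ\Sigma^q b$ and so requires an unstable representative $b$ of $\beta$ on the sphere carrying $g$; stable classes need not desuspend that far ($\sigma$ is born on $S^8$, which is exactly why $\sigma^2$ escapes ${\rm Im}(J_0)$), and $J$ is not an infinite loop map from the Whitney-sum structure on $SO$ to the additive structure on $\Omega^\infty\Sigma^\infty S^0$, so the infinite-loop-space appeal does not repair this. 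Indeed the failure of ${\rm Im}(J_0)$ to be closed under such operations is the theme of the whole paper. Your two needed inclusions happen to be true, but each needs a separate argument: $\sigma\circ\pi^s_9(S^0)\subset{\rm Im}(J_0)$ holds because the generators $\nu^3$, $\eta\circ\epsilon$, $\mu$ of $\pi^s_9(S^0)$ all desuspend below $S^7$, whereas $\eta\circ\pi^s_{15}(S^0)$ contains $\eta\circ(\eta\circ\kappa)=\eta^2\circ\kappa$, which no composition-with-$J_0$ trick reaches (neither $\kappa$ nor $\eta\circ\kappa$ lies in ${\rm Im}(J_0)$) and which must be checked to vanish. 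The paper sidesteps all of this by quoting the known value of the indeterminacy of $\langle\sigma,2\sigma,\eta\rangle$, namely $\Z/2\{\eta\circ\rho\}$, so that only $\eta\circ\rho\in{\rm Im}(J_0)\subset{\rm Im}(J_2)$ is required --- and that is precisely the correct argument you give in your closing parenthetical. Replace your module claim by that, and your proof closes.
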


In \fullref{sec2} we prove \fullref{Lem1} 
and \fullref{ThA}, and in \fullref{sec3} we prove \fullref{tH20}, 
\fullref{PropB} and \fullref{eta*2}.

\section[Proof of \ref{ThA}]{Proof of \fullref{ThA}}
\label{sec2}

We first prove \fullref{Lem1}. 

\begin{proof}[\bf Proof of \fullref{Lem1}]
Let $\xi$ be the canonical left $\H$--line bundle over $\H P^{\infty}$, 
and put $\tilde{\xi}=\xi-1_{\H}$. 
Also, let $\tilde{\xi _1}^*$ be the adjoint to the 
restriction of $\tilde{\xi}$ to $\H P^1=S^4$. 
Then, the restriction of the classifying map $\varphi$ of 
$\tilde{\xi}_1^*\otimes _{\H}\tilde{\xi}$ to the bottom sphere 
$S^8=S^4\wedge S^4$ represents a generator of $\pi _8(BSO)\cong\Z$.
As is known, the adjoint map $\Sigma ^4\H P^{\infty}\to BSO$ to the 
composition $B\circ i\co\Sigma ^3\H P^{\infty}\to SO$ 
is homotopic to $\varphi$. 
Since $\sigma =J_0(\iota _7)$ for a generator $\iota _7\in\pi _7(SO)$, 
$t_H'$ represents $\sigma$ up to sign. 
\end{proof}

Let $H(m)\co\Sigma SO\wedge SO\to\Sigma SO$ be the map defined by the 
Hopf construction on the multiplication 
$m\co SO\times SO\to SO$, and 
$J^*\co\Sigma SO\to \Omega ^{\infty}\Sigma ^{\infty}S^1$ the adjoint map 
to the stable $J$--map $J\co SO\to\Omega ^{\infty}\Sigma ^{\infty}S^0$. 
Then, Eccles and Walker have shown the following \cite[Proposition~2.2]{EW}, 
in which 
$\mu '\co\Sigma (\Omega ^{\infty}\Sigma ^{\infty}S^0\wedge
\Omega ^{\infty}\Sigma ^{\infty}S^0)\to \Omega ^{\infty}
\Sigma ^{\infty}S^1$ 
is the adjoint map to the multiplication 
$\mu \co\Omega ^{\infty}\Sigma ^{\infty}S^0\wedge\Omega ^{\infty}
\Sigma ^{\infty}S^0\to \Omega ^{\infty}\Sigma ^{\infty}S^0$. 

\begin{prop}\label{Prop2}
The composition 
$J^*\circ H(m)\co\Sigma SO\wedge SO\to\Omega ^{\infty}
\Sigma ^{\infty}S^1$ 
is homotopic to $\mu '\circ\Sigma (J\wedge J)$.
\end{prop}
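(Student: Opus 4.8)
The plan is to deduce the statement from two facts: the multiplicativity of the stable $J$--map with respect to the composition product on $\Omega^\infty\Sigma^\infty S^0$, and the naturality of the Hopf construction in both of its variables. Write $QS^k=\Omega^\infty\Sigma^\infty S^k$, let $c\co QS^0\times QS^0\to QS^0$ denote the composition product, and let $e\co\Sigma QS^0\to QS^1$ be the counit of the loop--suspension adjunction (using $QS^0\simeq\Omega\,QS^1$). By the definition of the adjoints we then have $J^*=e\circ\Sigma J$ and $\mu'=e\circ\Sigma\mu$. Since composing a stable self--map of a sphere with the null map is again null, $c$ restricts trivially to the wedge $QS^0\vee QS^0$ and hence factors as $c=\mu\circ q$ through the smash projection $q\co QS^0\times QS^0\to QS^0\wedge QS^0$; here $\mu$ is exactly the multiplication ``given by composition'' appearing in the statement.

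The first input I would record is multiplicativity: because $J$ carries a rotation to the associated self--map of a sphere, and a product of rotations to the composite of the associated self--maps, one has $J\circ m\simeq c\circ(J\times J)$ as maps $SO\times SO\to QS^0$. The second input is that the Hopf construction $H(f)[x,t,y]=[f(x,y),t]$ is natural: $\Sigma g\circ H(f)=H(g\circ f)$ in the target, and $H\bigl(f\circ(a\times b)\bigr)=H(f)\circ\Sigma(a\wedge b)$ in the source, the latter under the standard identification $X*Y\simeq\Sigma(X\wedge Y)$; moreover $H$ is homotopy invariant in $f$. Granting these, I would compute
\begin{align*}
J^*\circ H(m) &= e\circ\Sigma J\circ H(m)
= e\circ H(J\circ m)
\simeq e\circ H\bigl(c\circ(J\times J)\bigr)\\
&= e\circ H(c)\circ\Sigma(J\wedge J)
\simeq e\circ\Sigma\mu\circ\Sigma(J\wedge J)
=\mu'\circ\Sigma(J\wedge J),
\end{align*}
where the second equality is target--naturality, the first homotopy is multiplicativity together with homotopy invariance of $H$, the third equality is source--naturality, and the final homotopy is the identification $H(c)\simeq\Sigma\mu$ justified next.

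To see $H(c)\simeq\Sigma\mu\co\Sigma(QS^0\wedge QS^0)\to\Sigma QS^0$, I would use $c=\mu\circ q$ and target--naturality to write $H(c)=\Sigma\mu\circ H(q)$, reducing the claim to the assertion that $H(q)$ is the canonical equivalence $QS^0*QS^0\simeq\Sigma(QS^0\wedge QS^0)$. This is immediate from the formula, since $H(q)[x,t,y]=[x\wedge y,t]$ is precisely the standard homeomorphism realizing a join as the suspension of a smash; thus $H(q)$ is the identity under our identification and $H(c)\simeq\Sigma\mu$, closing the chain above.

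The manipulations with the Hopf construction are formal, so the step that I expect to require the most care---and hence the main obstacle---is the multiplicativity $J\circ m\simeq c\circ(J\times J)$ realized at the level of spaces: one must pin down the composition product $c$ on the colimit $QS^0$, verify that $J$ is an $H$--map for it up to homotopy, and check that this is compatible, after applying $e$, with the smash factorization $c=\mu\circ q$ used to define $\mu'$. A secondary point is the orientation entering the join--suspension identification and the source--naturality of $H$; any global sign produced there is harmless for the application in \fullref{ThA}, where only membership of $\nu^*$ in an image is at stake.
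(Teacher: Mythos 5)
The paper offers no proof of \fullref{Prop2} to compare yours against: the statement is quoted from Eccles and Walker \cite[Proposition~2.2]{EW}, and the paper's only use of it is the formal deduction recorded as \fullref{Cor3}. Your blind proof therefore fills in what the paper treats as a black box, and it is correct; it is the natural argument, and almost certainly the one behind the cited result. The skeleton is right: $J^*=e\circ\Sigma J$ and $\mu'=e\circ\Sigma\mu$ by definition of the adjoints (in your notation, $e$ the counit); the composition product $c$ factors strictly through the smash product as $c=\mu\circ q$, since composing with a constant map gives a constant map, and this $\mu$ is exactly the paper's multiplication; the Hopf construction is natural in source and target and homotopy invariant; and $H(q)$ is the canonical map from the join to the suspension of the smash.

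Two points deserve sharpening, neither fatal. First, the step you flag as the main obstacle --- the multiplicativity $J\circ m\simeq c\circ(J\times J)$ --- is actually the easiest: in the standard model, $J(A)\in\Omega ^nS^n$ is the extension of the rotation $A$ to the one-point compactification of $\mathbb{R}^n$, so $J(AB)=J(A)\circ J(B)$ holds on the nose, and this strict identity is compatible with the stabilizations $f\mapsto f\wedge 1$ defining the colimit; hence $J$ is strictly multiplicative for the composition product and no delicate homotopy is needed there. Second, $H(q)$ is not a homeomorphism: the canonical map $X*Y\to\Sigma (X\wedge Y)$ collapses the contractible subspace $(\{*\}*Y)\cup (X*\{*\})$ and is only a homotopy equivalence (valid for well-pointed spaces, which suffices here). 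For the same reason the paper's $H(m)$, written on $\Sigma SO\wedge SO$ rather than on the join $SO*SO$, is itself only defined through this equivalence, so your chain of identities should be read as homotopies of maps out of $SO*SO$, with the equivalence to $\Sigma (SO\wedge SO)$ cancelled at the end precisely because it is an equivalence. With those glosses your proof is complete, and, as you note, any global sign is immaterial for \fullref{ThA}. What the paper's citation buys is brevity; what your argument buys is a self-contained justification showing the proposition is purely formal once $J$ is known to be (strictly) multiplicative.
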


Recall that the homomorphism $J_1\co\pi _i(\Sigma SO)\to\pi ^s_{i-1}(S^0)$ 
is induced by $J^*$. 
Since $(J\wedge J)\circ (B\circ i\wedge B\circ i)$ is equal to 
$t_H\wedge t_H$,
by composing it with the maps of \fullref{Prop2}, 
$J^*\circ H(m)\circ \Sigma (B\circ i\wedge B\circ i)
\co\Sigma (\Sigma ^3\H P^{\infty}\wedge\Sigma ^3\H P^{\infty})\to
\Omega ^{\infty}\Sigma ^{\infty}S^1$ is homotopic to $t_H(2)=
\mu '\circ \Sigma (t_H\wedge t_H)$. 
Hence, we have the following.

\begin{cor}\label{Cor3}
$\text{\rm Im}(t_H(2)_*)\subset\text{\rm Im}(J_1)$ in 
$\pi _i^s(S^0)$.
\end{cor}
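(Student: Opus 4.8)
The plan is to exhibit $t_H(2)$ as a composite that factors through $J^*\co\Sigma SO\to\Omega^\infty\Sigma^\infty S^1$. Since $J_1$ is by definition the homomorphism induced by $J^*$, once such a factorization is in hand the containment of images is immediate on passing to homotopy groups. The argument is therefore essentially formal, packaging the definition of $t_H$, the functoriality of $\wedge$ and $\Sigma$, and \fullref{Prop2}.

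First I would unwind the definitions. By construction $t_H=J\circ B\circ i$, so that $t_H\wedge t_H=(J\wedge J)\circ(B\circ i\wedge B\circ i)$, and applying the suspension functor gives $\Sigma(t_H\wedge t_H)=\Sigma(J\wedge J)\circ\Sigma(B\circ i\wedge B\circ i)$. Substituting into the definition $t_H(2)=\mu'\circ\Sigma(t_H\wedge t_H)$ yields
$$
t_H(2)=\bigl(\mu'\circ\Sigma(J\wedge J)\bigr)\circ\Sigma(B\circ i\wedge B\circ i).
$$

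Second, I would invoke \fullref{Prop2}, which asserts $J^*\circ H(m)\simeq\mu'\circ\Sigma(J\wedge J)$. Combining this homotopy with the identity above gives
$$
t_H(2)\simeq J^*\circ\bigl(H(m)\circ\Sigma(B\circ i\wedge B\circ i)\bigr),
$$
where the bracketed map $g=H(m)\circ\Sigma(B\circ i\wedge B\circ i)$ has target $\Sigma SO$. Here one must track the suspension bookkeeping carefully: since $\Sigma(\Sigma^3\H P^\infty\wedge\Sigma^3\H P^\infty)=\Sigma^7(\H P^\infty\wedge\H P^\infty)$, the map $g\co\Sigma^7(\H P^\infty\wedge\H P^\infty)\to\Sigma SO$ has the correct source, and $J^*\circ g$ shares source and target with $t_H(2)$.

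Finally, passing to homotopy groups, the factorization $t_H(2)\simeq J^*\circ g$ shows that $(t_H(2))_*=(J^*)_*\circ g_*$, hence $\text{\rm Im}((t_H(2))_*)\subset\text{\rm Im}((J^*)_*)$. Since $J_1$ is precisely the homomorphism induced by $J^*$, we have $\text{\rm Im}((J^*)_*)=\text{\rm Im}(J_1)$, which gives the desired inclusion $\text{\rm Im}(t_H(2)_*)\subset\text{\rm Im}(J_1)$. The only points requiring attention are the naturality of the adjoint $\mu'$ under suspension and the suspension identities above; once \fullref{Prop2} is granted there is no substantial obstacle, so I expect this step to be entirely routine rather than the crux of the work.
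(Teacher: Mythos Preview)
Your proof is correct and follows exactly the same route as the paper: unwind $t_H=J\circ B\circ i$ to write $t_H(2)=\mu'\circ\Sigma(J\wedge J)\circ\Sigma(B\circ i\wedge B\circ i)$, then apply \fullref{Prop2} to replace $\mu'\circ\Sigma(J\wedge J)$ by $J^*\circ H(m)$, giving the factorization through $J^*$ and hence the containment of images. The paper presents this argument in a single sentence immediately before stating the corollary, but the content is identical to yours.
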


Let $\beta _i\in H_{4i}(\H P^{\infty};\thinspace\Z )\cong\Z$ 
be a generator.
Then, we have 
$H_{12}(\H P^{\infty}\wedge\H P^{\infty};\thinspace\Z )=
\Z\{\beta _1\otimes\beta _2\}\oplus\Z\{\beta _2\otimes\beta _1\}$.

\begin{lem}\label{Lem4}
There exists an element $a\in\pi _{12}(\H P^{\infty}\wedge\H P^{\infty})$
satisfying $h(a)=\beta _1\otimes\beta _2-\beta _2\otimes\beta _1$ for
the Hurewicz homomorphism $h$.
\end{lem}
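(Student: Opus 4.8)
The plan is to work entirely within the $12$--skeleton of $\H P^\infty\wedge\H P^\infty$ and to build $a$ by a skeletal argument. Writing $P=\H P^\infty$, recall that $P$ has a single cell in each dimension $4i$ and that $P^{(8)}=\H P^2=S^4\cup_\nu e^8$, where $\nu\co S^7\to S^4$ is the quaternionic Hopf map. Smashing, the subcomplex $X=(P\wedge P)^{(12)}=(\H P^2\wedge\H P^2)^{(12)}$ consists of the bottom cell $S^4\wedge S^4=S^8$ together with two $12$--cells $e_1$ and $e_2$ arising from the bidegrees $(4,8)$ and $(8,4)$, which carry the homology classes $\beta_1\otimes\beta_2$ and $\beta_2\otimes\beta_1$ respectively. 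Since the next cells of $P\wedge P$ occur only in dimension $16$, the inclusion $X\hookrightarrow P\wedge P$ induces isomorphisms on $\pi_{12}$ and on $H_{12}$, so it suffices to produce the desired class in $\pi_{12}(X)$.

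First I would identify the attaching maps of the two top cells. The $(4,8)$--cell is attached by $\mathrm{id}_{S^4}\wedge\nu\co S^4\wedge S^7\to S^4\wedge S^4$ and the $(8,4)$--cell by $\nu\wedge\mathrm{id}_{S^4}\co S^7\wedge S^4\to S^4\wedge S^4$, both viewed as elements of $\pi_{11}(S^8)$. The key point is that these two classes \emph{coincide}: the commutativity homeomorphism of the smash product relates them up to the degrees of the sphere interchanges $S^7\wedge S^4\to S^4\wedge S^7$ and $S^4\wedge S^4\to S^4\wedge S^4$, and both of these degrees equal $(-1)^{4\cdot 7}=(-1)^{4\cdot4}=+1$. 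Hence both $12$--cells are attached by the same class $\Sigma^4\nu\in\pi_{11}(S^8)$.

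Next I would run the homotopy exact sequence of the pair $(X,S^8)$. Because $(X,S^8)$ is $11$--connected, the relative Hurewicz theorem gives $\pi_{12}(X,S^8)\cong H_{12}(X,S^8)=\Z\{e_1\}\oplus\Z\{e_2\}$, and the boundary $\partial\co\pi_{12}(X,S^8)\to\pi_{11}(S^8)$ carries $e_1$ and $e_2$ to their respective attaching maps. By the previous step $\partial(e_1-e_2)=\Sigma^4\nu-\Sigma^4\nu=0$, so $e_1-e_2$ lies in the image of $\pi_{12}(X)\to\pi_{12}(X,S^8)$; I choose $a\in\pi_{12}(X)=\pi_{12}(P\wedge P)$ mapping to it. Finally, naturality of the Hurewicz homomorphism for the map of pairs $(X,\ast)\to(X,S^8)$, together with the isomorphism $H_{12}(X)\cong H_{12}(X,S^8)$ (valid since $H_*(S^8)$ vanishes in degrees $11$ and $12$), shows $h(a)=[e_1]-[e_2]=\beta_1\otimes\beta_2-\beta_2\otimes\beta_1$ after replacing $a$ by $-a$ if the orientation conventions demand it.

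The main obstacle is the sign bookkeeping in the second step: one must be certain that the two $12$--cells are attached by genuinely the \emph{same} element of $\pi_{11}(S^8)$, and not by classes differing by a sign, since $2\nu\neq 0$ in $\pi_{11}(S^8)\cong\Z/24$ and such a discrepancy would send $e_1-e_2$ outside $\ker\partial$ and obstruct the lift. This is precisely where the evenness of all the dimensions involved enters, forcing every sphere--interchange degree to be $+1$. Everything else is the standard skeletal analysis of a CW complex and an application of the relative Hurewicz theorem.
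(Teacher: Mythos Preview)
Your proof is correct and follows essentially the same approach as the paper: both reduce to the $12$--skeleton $S^{8}\cup_{\nu\vee\nu}(e^{12}\cup e^{12})$ and exploit the fact that the two $12$--cells are attached by the \emph{same} class $\nu$, so that the antidiagonal $e_1-e_2$ lies in the kernel of the boundary and lifts to the desired element. The paper phrases the argument via the stable cofibration sequence $S^8\to X\to S^{12}\vee S^{12}$ (using $\pi^s_{12}(S^8)=\pi^s_4=0$ to compute the whole group), while you use the long exact sequence of the pair $(X,S^8)$ together with relative Hurewicz; since everything is in the stable range these are interchangeable packagings of the same computation.
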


\begin{proof} 
Notice that $\pi _{12}(\H P^{\infty}\wedge\H P^{\infty})
\cong \pi ^s_{12}(\H P^{\infty}\wedge\H P^{\infty})$. 
Since $\H P^2=S^4\cup _{\nu}e^8$, the $12$--skeleton of 
$\H P^{\infty}\wedge\H P^{\infty}$ has a cell structure 
$S^8\cup _{\nu\vee\nu}(e^{12}\cup e^{12})$.
Thus, we have the following exact sequence:
$$
\pi ^s_{12}(S^8)=0\to \pi _{12}^s(\H P^{\infty}\wedge
\H P^{\infty})\stackrel{\psi}{\longrightarrow}\pi ^s_{12}
(S^{12}\vee S^{12})\stackrel{\varphi}{\longrightarrow}\pi ^s_{11}(S^8).
$$
Here, $\varphi \co\pi ^s_{12}(S^{12}\vee S^{12})=\Z\{ \iota _1\}\oplus 
\Z\{ \iota _2\}\to \pi ^s_{11}(S^8)=\Z /24\{\nu\}$ satisfies 
$\varphi (\iota _1)=\varphi (\iota _2)=\nu$.
Therefore, 
$\pi ^s_{12}(\H P^{\infty}\wedge\H P^{\infty})=\Z\{ a\}\oplus\Z
\{ b\}$ for some elements $a$ and $b$ satisfying 
$\psi _*(a)=\iota _1-\iota _2$ and $\psi _*(b)=24\iota _2$.
Then, $a$ is a required element up to sign, since we have 
$\{\psi _*(\beta _1\otimes\beta _2), \psi _*(\beta _2\otimes\beta _1)\}
=\{ h(\iota _1),h(\iota _2)\}$ up to sign. 
\end{proof}

\begin{proof}[Proof of \fullref{ThA}]
Let $a\in\pi _{12}(\H P^{\infty}\wedge\H P^{\infty})$ be the element 
in \fullref{Lem4}. 
Since $\pi _{12}(\H P^{\infty}\wedge\H P^{\infty})\cong
\pi _{12}^s(\H P^2\wedge\H P^2)$ and 
$t_H(2)_*=(t_H^s\wedge t_H^s)_*\Sigma ^{\infty}$ 
for the suspension isomorphism 
$\Sigma ^{\infty}\co\pi _{19}(\Sigma ^7\H P^2\wedge\H P^2)\to
\pi ^s_{12}(\H P^2\wedge\H P^2)$, 
we identify $a$ with 
$\Sigma ^{\infty}a\in\pi ^s_{12}(\H P^2\wedge\H P^2)$. 
Then, it is sufficient to show $(t_H^s\wedge t_H^s)_*(a)=\nu ^*$ 
up to sign.

The following diagram is stably homotopy commutative up to 
sign:
$$
\begin{CD}
S^{12}@>a>>\H P^2\wedge\H P^2@>t_H^s\wedge 1>>S^{-3}\wedge\H P^2@>1
\wedge t_H^s>>S^{-6} \\
@VV\simeq V  @VV1\wedge qV @VVqV \\
S^4\wedge S^8@>i\wedge 1>>\H P^2\wedge S^8@>t_H^s\wedge 1>>S^{-3}\wedge S^8, 
\end{CD}
$$
where $q$ is the collapsing map to the top cell. 
The left hand square is stably homotopy commutative because 
$(1\wedge q)_*(a)$ is a generator of $\pi ^s_4(\H P^2)$ by \fullref{Lem4}.
Hence, $q\circ (t_H^s\wedge 1)\circ a\simeq (t_H^s\circ i)
\wedge 1\simeq \sigma$ by
\fullref{Lem1}, and thus $(t_H^s\wedge 1)\circ a$ is a coextension of 
$\sigma$.
On the other hand, 
since $t_H^s\co\H P^2\to S^{-3}$ is an extension of 
$\sigma \co S^{4}\to S^{-3}$ by \fullref{Lem1}, 
$$
(t_H^s\wedge t_H^s)_*(a)=(1\wedge t_H^s)_*(t_H^s\wedge 1)_*(a)
\in\langle\sigma ,\nu ,\sigma \rangle =\{ -\nu ^*\}
$$ 
by definition of the Toda bracket.
Thus, we have completed the proof.
\end{proof}

\section{Transfer image in $\pi ^s_{16}(S^0)$}
\label{sec3}

We remark that 
$$
\pi _{17}(\Sigma ^7\H P^{\infty}\wedge \H P^{\infty})\cong 
\pi ^s_{10}(\H P^{\infty}\wedge \H P^{\infty})\cong 
\pi ^s_{10}(S^8)=\Z /2\{\eta ^2\}. 
$$
Then, using \fullref{Lem1}, 
$$
t_H(2)_*(\eta ^2)=((t_H^s)_*(\eta ))^2=(\sigma\circ\eta )^2=0, 
$$
and thus we have \fullref{tH20}. 

Lastly, we prove \fullref{PropB} and \fullref{eta*2}. 

\begin{proof}[\bf Proof of \fullref{PropB}]
As mentioned in \fullref{sec1}, 
$\pi _{16}^s(S^0)=\Z /2\{ \eta ^*\}\oplus \Z /2\{\eta\circ
\rho\}$ by Toda~\cite{To}, and we shall show 
$$
\text{\rm Im}
((t_H^s)_*\co\pi _{13}^s(\H P^{\infty})\to \pi _{16}^s(S^0))=
\Z /2\{\eta\circ\rho\}.
$$

Notice that $\pi _{13}^s(\H P^{\infty})\cong\pi _{13}^s(\H P^{3})$ 
and the attaching map $\varphi \co S^{11}\to \H P^2$ of 
the top cell in $\H P^3$ is stably a coextension of 
$2\nu\in\pi _3^s(S^0)$. 
We consider the following exact sequence:
$$
\pi _{14}^s(S^{12})\stackrel{\varphi _*}{\longrightarrow}
\pi _{13}^s(\H P^2)\stackrel{i_*}{\longrightarrow} \pi _{13}^s(\H P^3)
\stackrel{q_*}{\longrightarrow} \pi _{13}^s(S^{12})\stackrel{\varphi _*}
{\longrightarrow}\pi _{12}^s(\H P^2),
$$
where $i$ is the inclusion $\H P^2\subset \H P^3$ and 
$q$ is the collapsimg map $\H P^3\to S^{12}$ to the top cell.

Since $\H P^2\approx S^4\cup _{\nu}e^8$, we have 
\begin{align*}
\pi _{12}^s(\H P^2)&=\Z /2\{(i_0)_*(\overline{\nu} )\}\oplus\Z /2
\{(i_0)_*(\epsilon )\}\\
\text{and}\quad
\pi _{13}^s(\H P^2)&=\Z /2\{(i_0)_*(\mu )\}
\oplus\Z /2\{(i_0)_*(\eta\circ\epsilon )\}
\end{align*}
for the bottom inclusion $i_0\co S^4\to \H P^2$, 
where
$$\pi _8^s(S^0)=\Z /2\{\overline{\nu}\}\oplus\Z /2\{\epsilon\}
\quad\text{and}\quad
\pi _9^s(S^0)=\Z /2\{\mu\}\oplus\Z /2\{\eta\circ\epsilon\}\oplus
\Z /2\{\nu ^3\}.$$
Since $\varphi$ is a coextension of $2\nu$, 
we have 
$\varphi _*(\eta )=\varphi\circ\eta\in (i_0)_*
(\langle\nu , 2\nu , \eta \rangle)\equiv (i_0)_*(\epsilon )
\pmod{i_*(\overline{\nu})}$. 
Thus, $\varphi _*\co\pi _{13}^s(S^{12})\to \pi _{12}^s(\H P^2)$ 
is a monomorphism. 
Similarly, $\varphi _*(\eta ^2)=\varphi\circ\eta ^2$ is 
contained in 
$(i_0)_*(\langle\nu ,2\nu ,\eta ^2\rangle)\equiv (i_0)_*
(\eta\circ\epsilon )\pmod{(i_0)_*(\nu ^3)}$. 
Hence, we have 
$\pi _{13}^s(\H P^{\infty})\cong\pi _{13}^s(\H P^{3})
=\Z /2\{(i_0)_*(\mu )\}$. 

Since $t_H^s\co\Sigma ^3\H P^{\infty}\to S^0$ is an extension of 
$\sigma\co S^7\to S^0$ by \fullref{Lem1}, 
we conclude that 
Im$((t_H^s)_*\co\pi _{13}^s(\H P^{\infty})\to \pi _{16}^s(S^0))
=\Z /2\{\sigma\circ\mu\}=\Z /2\{\eta\circ\rho\}$.
\end{proof}

\begin{proof}[Proof of \fullref{eta*2}]
As mentioned in \fullref{sec1}, 
$$
\eta ^*\in \langle\sigma ,2\sigma ,\eta\rangle\subset 
\pi _{16}^s(S^0), 
$$
and the indeterminacy of $\langle\sigma ,2\sigma ,\eta\rangle$ is 
$\Z /2\{\eta\circ\rho\}$. 

Recall that $\sigma=J_0(\iota _7)\in\pi _7^s(S^0)$ up to sign 
for a generator $\iota _7\in\pi _7(SO)\cong\Z$. 
By Toda~\cite[Proposition~5.15, (5.16), Proposition~5.1]{To}, there
exist elements $\sigma '\in\pi _{14}(S^7)$ and $\eta _{16}\in
\pi_{17}(S^{16})$ which suspend to $2\sigma$ and $\eta$, respectively.
Then, $\iota _7\circ\sigma '=0$ since $\pi _{14}(SO)=0$.
Also, by \cite[Theorem~7.1]{To}, $(\Sigma ^2\sigma ')\circ\eta _{16}=0$. 
Hence, an unstable Toda bracket 
$\{ \Sigma ^2\iota _7,\Sigma ^2\sigma ',\eta _{16}\}$ 
is defined in $\pi _{18}(\Sigma ^2SO)$, 
and it satisfies 
$$
J_2(\{ \Sigma ^2\iota _7,\Sigma ^2\sigma ',\eta _{16}\})
\cap
\langle\sigma ,2\sigma ,\eta \rangle\neq\phi .
$$ 
Hence, $\eta ^*$ or $\eta ^*+\eta\circ\rho$ is in ${\rm Im}(J_2)$. 
But, since $\eta\circ\rho\in {\rm Im}(J_0)$ and 
${\rm Im}(J_0)\subset {\rm Im}(J_2)$, we have the required result. 
\end{proof}

\bibliographystyle{gtart}
\bibliography{link}

\end{document}